\newtheorem{theorem}{Theorem}
\newtheorem{lemma}{Lemma}
\author{Andrey Yu. Rumyantsev}
\title{Everywhere complex sequences and\\ probabilistic
method\thanks{Supported by RFBR 0901-00709a and
NAFIT ANR-08-EMER-008 grants.}}
\date{}
\DeclareMathOperator{\K}{\mathrm{K}}
\def\INT{\mathbb{N}}
\begin{document}
\maketitle

\begin{abstract}
The main subject of the paper is everywhere complex sequences.
An everywhere complex sequence is a sequence that does not
contain substrings of Kolmogorov complexity less than $\alpha
n-O(1)$ where $n$ is the length of substring and $\alpha$ is a
constant between $0$ and~$1$.

First, we prove that no randomized algorithm can produce
everywhere complex sequence with positive probability.

On the other hand, for weaker notions of everywhere complex
sequences the situation is different. For example, there is a
probabilistic algorithm that produces (with probability~$1$)
sequences whose substrings
of length $n$ have complexity $\sqrt{n}-O(1)$.

Finally, one may replace the complexity of a substring (in the
definition of everywhere complex sequence) by its conditional
complexity when the position is given. This gives a stronger
notion of everywhere complex sequence,
and no randomized algorighm can produce (with positive
probability) such a sequence even if $\alpha n$ is replaced by
$\sqrt{n}$, $\log^* n $ or any other monotone unbounded
computable function.
\end{abstract}

\section{Introduction}

The paper considers binary sequences with substrings of high
Kolmogorov complexity. Kolmogorov complexity of a binary string
is the minimal length of a program that produces this string. We
refer the reader to~\cite{livitan} or~\cite{shen-lecture-notes}
for the definition and basic properties of Kolmogorov
complexity.

The Levin--Schnorr Theorem (see, e.g.,~\cite{livitan}) characterizes
randomness of a sequence in terms of complexity of its prefixes.
It implies that a $n$-bit prefix of a Martin-L\"of random sequence
has complexity $n-O(1)$.
(Technically, we should consider monotone or prefix complexity
here; for plain complexity we have $n-O(\log n)$ bound, but
in this paper logarithmic precision is enough.) So
sequences with complex prefixes exists (and, moreover, fair coin
tossing produces such a sequence with probability~$1$).

If we require that all substrings (not only prefixes) are
complex, the situation changes. Random sequences no more have
this property, since every random sequence contains arbitrarily
long groups of consequtive zeros (and these groups have very
small complexity).

However, the sequences with this property (``everywhere
complex'') still exist. The following Lemma (proved by Levin,
see~\cite{dls}) says that there exists a sequence where every
substring has high complexity (though the condition is now
weaker, the complexity is greater than $\alpha n-O(1)$ where $n$
is the length and $0<\alpha<1$).

Here is the exact statement. We denote by $\omega([i,j))$ a
substring $\omega_i\omega_{i+1}\omega_{i+2}\dots\omega_{j-1}$ of
a sequence $\omega$; by $K(u)$ we denote Kolmogorov complexity
of a binary string $u$.

\begin{lemma}[Levin]
\label{levin}
Let $\alpha$ be a real number, $0<\alpha<1$. There exists a
sequence $\omega$ such that
    $$
\K(\omega([k,k+n)))\ge\alpha n-O(1).
    $$
for all natural numbers $k$ and $n$.
\end{lemma}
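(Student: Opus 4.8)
The plan is to realize the desired $\omega$ as an infinite binary sequence that avoids, as substrings, all \emph{simple} strings, where I call a string $u$ simple if $\K(u) < \alpha|u| - c$ for a constant $c$ to be fixed only at the very end. Any $\omega$ with no simple substring satisfies $\K(\omega([k,k+n))) \ge \alpha n - c$ for all $k,n$, which is exactly the claim. Two elementary facts drive the argument. First, since there are fewer than $2^{t}$ strings of complexity below $t$, the number of simple strings of a given length $n$ is smaller than $2^{\alpha n - c}$. Second, and crucially, $2^{\alpha n - c} \le 1$ once $n \le c/\alpha$, so there are \emph{no} simple strings of length below a threshold $n_0 = n_0(c)$ that grows linearly with $c$. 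Thus the constraints we must satisfy involve only substrings of length $n \ge n_0$, and I can push $n_0$ as high as I like by enlarging $c$.

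Next I set up the probabilistic method. Let $\omega$ be a sequence of independent fair bits and, for each interval $[k,k+n)$ with $n \ge n_0$, let $A_{k,n}$ be the bad event ``$\omega([k,k+n))$ is simple.'' By the counting above, $\Pr[A_{k,n}] < 2^{\alpha n - c}/2^{n} = 2^{-(1-\alpha)n - c}$, which decays exponentially in $n$. The event $A_{k,n}$ is determined by the bits in positions $[k,k+n)$, so $A_{k,n}$ and $A_{k',n'}$ are independent unless their intervals overlap; for each length $n'$ there are at most $n + n' - 1$ starting positions $k'$ whose interval of length $n'$ meets $[k,k+n)$. My goal is to show that with positive probability none of the $A_{k,n}$ occurs, which yields a sequence with no simple substring.

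To do this I would apply the general (asymmetric) Lov\'asz Local Lemma with weights $x_{k,n} = 2^{-\beta n}$ for a parameter $\beta \in (0, 1-\alpha)$. The hypothesis to check is
$$
\Pr[A_{k,n}] \le x_{k,n} \prod_{(k',n'):\,[k',k'+n')\cap[k,k+n)\neq\emptyset} (1 - x_{k',n'}),
$$
and, using the dependency count, the base-$2$ logarithm of the product is, up to constant factors, $-\sum_{n'\ge n_0}(n+n'-1)\,2^{-\beta n'}$. The coefficient of $n$ here is $\sum_{n'\ge n_0}2^{-\beta n'}$, which tends to $0$ as $n_0 \to \infty$; hence the product equals $2^{-(\kappa n + O(1))}$ with $\kappa=\kappa(\beta,n_0)$ made arbitrarily small by taking $c$ (and thus $n_0$) large. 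For such a choice the required inequality $2^{-(1-\alpha)n-c} \le 2^{-(\beta+\kappa)n - O(1)}$ holds for every $n \ge n_0$, since $\beta + \kappa < 1-\alpha$ kills the linear term and the large additive constant $c$ dominates the $O(1)$.

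The Local Lemma then guarantees that for every finite family of these events there is positive probability of avoiding all of them; since the set of sequences avoiding a fixed finite family of simple substrings is closed in the compact space $\BIT^{\INT}$ (equivalently, arbitrarily long finite strings with no simple substring form an infinite finitely-branching tree), compactness, or K\"onig's lemma, produces an infinite $\omega$ avoiding every simple substring, completing the proof. The main obstacle, and the reason a plain union bound over positions fails, is precisely the contribution of short substrings: without the observation that enlarging $c$ forbids only long substrings, pushing $n_0 \to \infty$ and $\kappa \to 0$, the infinitely many overlapping short-interval events would make the product $\prod(1-x_{k',n'})$ collapse and the Local Lemma condition unsatisfiable.
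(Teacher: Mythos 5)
Your proof is correct, but it is not the argument this paper gives. The paper in fact contains no proof of Lemma~\ref{levin}: it attributes to Levin~\cite{dls} a complexity-theoretic construction (build the sequence left to right in blocks, each new block chosen to increase the complexity of the prefix as much as possible), and separately remarks that the combinatorial restatement of the lemma is a corollary of the Lov\'asz Local Lemma~\cite{rumyantsev-1,rumyantsev-2}. Your proposal carries out precisely this second, combinatorial route: the only complexity facts you use are that there are fewer than $2^{\alpha n-c}$ simple strings of length $n$ and none at all of length below $c/\alpha$; everything afterwards is the asymmetric Local Lemma with weights $2^{-\beta n}$, $\beta\in(0,1-\alpha)$, followed by K\"onig's lemma. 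Your bookkeeping checks out: the probability bound $2^{-(1-\alpha)n-c}$, the overlap count $n+n'-1$, and the key point that the coefficient $\sum_{n'\ge n_0}2^{-\beta n'}$ of $n$ in the logarithm of the product can be driven below $1-\alpha-\beta$ by enlarging $c$ (hence $n_0$) are exactly what makes the hypothesis of the Local Lemma verifiable, and the events for lengths below $n_0$ are vacuous as you say. One phrasing to tighten: the Local Lemma requires $A_{k,n}$ to be \emph{mutually} independent of the whole collection of events with disjoint intervals, not merely pairwise independent of each one; this follows at once from your own observation that $A_{k,n}$ is determined by the bits in $[k,k+n)$, but it should be stated in that form. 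As for what each route buys: your argument actually proves the stronger purely combinatorial statement (\emph{any} family of at most $2^{\alpha n}$ forbidden strings per length $n$ can be avoided), which also yields Lemma~\ref{genlevin} and generalizes further, whereas Levin's argument is shorter and needs no Local Lemma; and your closing remark that a plain union bound cannot work is corroborated by the paper's purely probabilistic counterpart of Theorem~\ref{th1}, which shows that the direct probabilistic method necessarily fails here.
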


Here the constant $O(1)$ may depend on $\alpha$ but not on $n$ and $k$.

Levin's proof in~\cite{dls} used complexity arguments:
informally, we construct the sequence from left to right adding
bit blocks; each new block should increase the complexity as
much as possible.

Later it became clear that this lemma has combinatorial meaning:
if for every $n$ some $2^{\alpha n}$ strings of length $n$ are
``forbidden'', there exists an infinite sequence without long
forbidden substrings. This combinatorial interpretaion shows
that the statement of the lemma (and even a stronger statement
about subsequences, not only substrings) is a corollary of the
Lovasz local lemma (see~\cite{rumyantsev-1,rumyantsev-2}).
Recently two more proofs were suggested (by Joseph
Miller~\cite{miller-two-notes} and Andrej Muchnik).

Before stating our results, let us mention the following
slightly generalized version of Levin's lemma. Though not stated
explicitly in~\cite{dls}, it can be proved by the same argument.

\begin{lemma}[Levin, generalized]
\label{genlevin}
Let $\alpha$ be a real number, $0<\alpha<1$. Then there exists
a sequence $\omega$ such that
    $$
\K(\omega([k,k+n))\mid k,n)\ge\alpha n-O(1).
    $$
for all integers $k$, $n$.
\end{lemma}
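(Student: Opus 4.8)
The plan is to prove the generalized version by the same combinatorial/probabilistic route that underlies Levin's original lemma, but keeping track of the position $k$ as an explicit parameter. The key observation is that conditional complexity $\K(u \mid k, n)$ bounds strings from below exactly like unconditional complexity, except that the counting of "simple" strings is now done position-by-position. Concretely, for each pair $(k,n)$ the set of strings $u$ of length $n$ with $\K(u \mid k,n) < \alpha n - c$ has size less than $2^{\alpha n - c}$, since these strings are the outputs of programs shorter than $\alpha n - c$ run with input $(k,n)$. So the "forbidden" set $F_{k,n}$ of low-complexity strings at position $k$ of length $n$ is small in exactly the same quantitative sense as before, and the whole difficulty reduces to the combinatorial statement: if for every position $k$ and length $n$ we forbid at most $2^{\beta n}$ strings (for some $\beta < 1$), then there is an infinite $\omega$ avoiding every forbidden substring at its designated position.

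**First I would** fix $\beta$ with $\alpha < \beta < 1$ and absorb the additive constant into the threshold, so that for large enough $c$ the forbidden sets satisfy $|F_{k,n}| \le 2^{\beta n}$ uniformly in $k$. Then I would set up the probabilistic argument on a random sequence $\omega$ produced by an appropriate (not fair-coin) distribution, and for each bad event $A_{k,n} = \{\omega([k,k+n)) \in F_{k,n}\}$ estimate its probability. The natural tool is the Lovász local lemma in the form already cited in the excerpt via \cite{rumyantsev-1,rumyantsev-2}: the event $A_{k,n}$ depends only on coordinates in $[k,k+n)$, so its dependency neighborhood consists of events $A_{k',n'}$ whose intervals overlap $[k,k+n)$, and one checks the local-lemma inequality with weights decaying geometrically in $n$. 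The fact that we now index forbidden sets by $(k,n)$ rather than by $n$ alone changes nothing in this step, because the local lemma already treats each interval separately; the position-dependence of $F_{k,n}$ is harmless.

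**The main obstacle** I anticipate is not the probabilistic estimate itself but ensuring that the resulting sequence genuinely witnesses the conditional bound for \emph{all} $(k,n)$ simultaneously with a single additive constant, including small $n$ where $2^{\beta n}$ may exceed the total number $2^n$ of strings. For such short lengths the forbidden set could in principle be everything, so I would handle small $n$ separately: choose $c$ large enough that $\alpha n - c < 0$ (hence the bound is vacuous) for all $n$ below some threshold $n_0$, and apply the local-lemma machinery only to lengths $n \ge n_0$, where $2^{\beta n} \le 2^{n-1}$ leaves room. The other delicate point is uniformity of the constant in $k$: since the bound on $|F_{k,n}|$ is uniform in $k$, and the local-lemma weights depend only on $n$, the constant $O(1)$ one extracts is indeed independent of both $k$ and $n$, which is precisely what the statement demands.

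**Alternatively**, I would note that the entire proof can be inherited from Lemma~\ref{levin} almost verbatim: Levin's construction (whether by the complexity argument sketched in the excerpt or by the local lemma) never uses that the forbidden sets are the \emph{same} for every position, only that each is small, so replacing $\K(u)$ by $\K(u \mid k,n)$ and the position-independent forbidden family by the position-indexed one $\{F_{k,n}\}$ leaves every step intact. This is exactly the sense in which the excerpt remarks that the generalized lemma ``can be proved by the same argument.''
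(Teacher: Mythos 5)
Your proposal is correct and follows exactly the route the paper intends: the paper gives no detailed proof of Lemma~\ref{genlevin}, only the remark that ``the same argument'' works, with the combinatorial reading that forbidden sets may now differ from position to position. Your fleshing-out is sound --- the counting bound $|F_{k,n}|<2^{\alpha n-c}$ for conditional complexity, the observation that the Lov\'asz local lemma (or Levin's original construction) never uses position-independence of the forbidden sets, and the handling of small $n$ by making the bound vacuous are precisely the details the paper leaves implicit.
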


Here $K(x|y)$ denotes conditional Kolmogorov complexity of a string
$x$ when $y$ is given (i.e., the minimal length of a program
that transforms $y$ to $x$).
The difference is that substrings are now complex with respect
to their position and length (so, for example, the binary
representation of $n$ can not appear starting from
position~$n$). In combinatorial terms, we have different sets of
forbidden substrings for different positions.

One can ask how ``constructive'' could be the proofs of Levin's
lemma and its variants. There are several different versions of
this question. One may assume that the set of forbidden strings
is decidable and ask whether there exists a computable sequence
that avoids all sufficiently long forbidden strings. Miller's
argument shows that this is indeed the case, though a similar
question of 2D configurations (instead of 1D sequences,
cf.~\cite{rumyantsev-1}) is still open.

In this paper we consider a different version of this question
and ask whether there exists a probabilistic algorithm that
produces a sequence satisfying the statement of Levin's Lemma
(or some version of it) with positive probability.

\section{The results}

We say that a sequence $\omega$ is \emph{$\alpha$-everywhere
complex} if
     $$\K(\omega([k,k+n)))\ge\alpha n-c$$
for some constant $c$ and for all integers $k$ and $n$.

\begin{theorem}\label{th1}
No probabilistic algorithm can produce with positive
probability a sequence~$\omega$ that is $\alpha$-everywhere complex
for some $\alpha\in(0,1)$.
\end{theorem}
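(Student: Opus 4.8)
The plan is to view the probabilistic algorithm as a machine reading an infinite tape of fair coin flips and emitting output bits, and to analyse the push-forward $\mu$ of the uniform measure along the map $r\mapsto\omega$ that it computes. The statement to prove then becomes $\mu(\{\omega:\omega\text{ is }\alpha\text{-everywhere complex}\})=0$. Since being $\alpha$-everywhere complex means the bound $\K(\omega([k,k+n)))\ge\alpha n-c$ holds for \emph{some} constant $c$, the event is an increasing union over $c$ of the events $E_c$ given by this bound with a fixed $c$; by monotonicity it suffices to show $\mu(E_c)=0$ for every fixed $c$, so I fix $c$ and aim for a contradiction with the assumption $\mu(E_c)>0$.

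First I would dispose of the degenerate case in which, with positive probability, the machine reads only finitely many random bits yet emits an infinite sequence. On that event the tail of $\omega$ is computable from a finite string, so it contains arbitrarily long substrings of complexity $O(\log n)$, which are not $\alpha$-complex; hence that event is disjoint from $E_c$ and may be ignored. In the remaining case the machine almost surely consumes infinitely many random bits. The basic quantitative link is that an output block $\omega([k,k+n))$ is computed from the machine's internal state when it reaches position $k$, together with the random bits consumed while producing the block; thus, conditionally on that state, its complexity is at most $q+O(\log n)$, where $q$ is the number of consumed bits. Consequently $E_c$ forces the machine to burn random bits at rate at least $\alpha$ per output bit on \emph{every} window: to keep each length-$n$ block above $\alpha n-c$ it must read $\gtrsim\alpha n$ fresh coins across the corresponding positions.

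The probabilistic heart of the argument is that this is incompatible with the driving bits being genuine coin flips. Because a random tape almost surely contains arbitrarily long low-complexity stretches (e.g.\ long runs), the algorithm faces a dilemma: if it passes its randomness through essentially injectively---which it must, to preserve the high per-bit complexity rate---then these simple stretches surface as simple \emph{output} blocks; if instead it edits them away, it behaves deterministically over a stretch and again emits a block computable from short advice. Either way a substring of complexity below $\alpha$ times its length appears. I would formalise the dilemma through the conditional distribution of the next output bits given the past: where it is concentrated the block is compressible, and where it is spread a conditional Borel--Cantelli (L\'evy) argument produces constant runs almost surely. Running this over disjoint regions upgrades ``a simple block occurs with positive probability'' to ``simple blocks occur almost surely,'' contradicting $\mu(E_c)>0$.

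The step I expect to be the main obstacle is controlling the machine's internal state. Describing an interior output block naively requires encoding the state at the block's start, and this state can be as complex as the entire prefix read so far---so the transparent bound ``few consumed coins $\Rightarrow$ low complexity'' is destroyed, and, symmetrically, runs appearing in the \emph{input} tape cannot be cheaply propagated to the output. My intended way around this is to locate the low-complexity witnesses as runs or constant blocks in the \emph{output} itself, which are unconditionally simple and need no description of the state, and to prove that such blocks must occur almost surely via the conditional-probability dichotomy above, rather than by transporting simplicity from the input. Getting the bookkeeping of positions and the almost-sure upgrade right, uniformly in $k$ and $n$ (where the linear gap $\alpha n$ comfortably absorbs the $O(\log)$ overheads), is where the real work lies.
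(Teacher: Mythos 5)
Your preliminary reductions (fixing the constant $c$, discarding the event where only finitely many coins are read) are fine, and you correctly identified the central obstacle: the machine's configuration at the start of an interior block can be as complex as everything read so far, so neither ``few coins consumed $\Rightarrow$ simple block'' nor ``determined by the past $\Rightarrow$ simple block'' is justified. But your proposed workaround fails on its other branch. The claim ``where the conditional distribution of the next bits is spread, a conditional Borel--Cantelli (L\'evy) argument produces constant runs almost surely'' is false: a spread conditional distribution can be supported entirely on complex strings and assign probability $0$ to every constant run. The paper's own Theorem~\ref{th2} provides a counterexample among genuine probabilistic algorithms: the algorithm $\omega_i=\tau_{f(i)}$ has maximally spread conditionals (each block of length $2^m$ contains $a_m2^m$ fresh uniform bits), yet almost surely its output satisfies $\K(\omega([k,k+n)))\ge a_{[\log n]}n-c$, so constant runs of length $n$ (complexity $O(\log n)$) occur for only finitely many $n$. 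Hence the statement you plan to establish --- that runs, or members of any fixed, distribution-independent family of ``intrinsically simple'' strings, must appear almost surely --- is simply not true. To contradict $\alpha$-everywhere complexity one must force the output to hit one of the roughly $2^{\alpha n}$ strings of complexity below $\alpha n-c$, and \emph{which} of these a given algorithm can be forced to emit depends on the algorithm; any family describable without reference to the algorithm consists of strings of complexity far below the $\alpha n$ budget, and Theorem~\ref{th2}-type constructions avoid all such strings with probability $1$.

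There is also a structural reason the plan cannot succeed as written: once you abandon the state-based complexity bound, the remaining ingredients (conditional laws, Borel--Cantelli) make sense for an \emph{arbitrary} probability measure on Cantor space and never use that the machine is an algorithm. But the theorem is false for arbitrary measures: the point mass on a Levin sequence (Lemma~\ref{levin}) produces an $\alpha$-everywhere complex sequence with probability $1$; for this measure every conditional is concentrated and each block is determined by the past, yet the blocks are incompressible --- which also shows that your ``concentrated $\Rightarrow$ compressible'' branch is invalid, for exactly the state-complexity reason you flagged. Any correct proof must exploit computability, and this is where the paper's argument differs essentially from yours: Lemma~\ref{main-lemma} supplies \emph{random} sets $\mathcal{A}_i$ of size at most $2^{\alpha i}$ such that every fixed string of length $N$ contains an element of some $\mathcal{A}_i$ with probability more than $1-\varepsilon$; averaging this against the algorithm's output distribution and exchanging the order of integration yields specific sets adapted to that distribution; because the distribution (or, in the positive-probability case, an $\varepsilon/2$-approximation to it recovered from $\log(1/\varepsilon)+O(1)$ advice bits) is computable, such sets can be found by exhaustive search, so each element of $A_i$ has complexity at most $\alpha i+O(\log i)$; Borel--Cantelli then gives almost sure violation of $\alpha$-everywhere complexity. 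The witnesses are simple because the sets containing them are \emph{searchable from the distribution}, not because of any intrinsic structure of the strings --- that adaptive choice is the idea missing from your proposal.
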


\begin{theorem}\label{th2}
Let $\sum_{i=0}^{\infty}a_i$ be a computable converging series
of nonnegative rational numbers. There exists a probabilistic
algorithm that produces with probability~$1$ some
sequence~$\omega$ such that
         $$\K(\omega([k,k+n)))\ge a_{[\log n]}n-c$$
for some $c$ and for all $k$ and $n$.
\end{theorem}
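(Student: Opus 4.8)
The plan is to recast the statement as a \emph{forbidden substring avoidance} problem and to produce a sequence that literally contains no forbidden substring, rather than one that merely avoids them with high probability. For each length $n$ put $i=[\log n]$ and call a string $u$ of length $n$ \emph{forbidden} if $\K(u)<a_i n-c$. Since the number of strings of complexity below a threshold $s$ is smaller than $2^{s}$, the set $F_n$ of forbidden strings of length $n$ satisfies $|F_n|<2^{a_i n-c}$, so forbidden strings occupy at most a $2^{-(1-a_i)n-c}$ fraction of all length-$n$ strings. For the lengths with $a_i n\le c$ nothing is forbidden and the required inequality $\K(\omega([k,k+n)))\ge a_i n-c$ holds trivially because complexity is nonnegative; hence it suffices to build an infinite $\omega$ containing no forbidden substring of length $n\ge n_0(c)$.

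First I would construct $\omega$ from left to right in blocks, choosing the bits of each new block by independent fair coin tosses and \emph{rejecting} (resampling) a block whenever appending it completes a forbidden substring. The central estimate is that, conditioned on any legal prefix, a freshly sampled block completes some forbidden substring only with probability bounded away from $1$. Summing over the possible right endpoints inside the new block and over the lengths $n$, and using that a forbidden completion of length $n$ must match one of at most $|F_n|<2^{a_i n-c}$ targets on its freshly tossed bits, this probability is controlled by a series of the shape $\sum_n 2^{-(1-a_i)n-c}$, up to polynomial factors coming from endpoint positions and block lengths. The hypothesis that $\sum_i a_i$ converges forces $a_i\to0$, so $1-a_i$ stays bounded below by a positive constant for large $i$ and this series converges; choosing $c$ large enough makes the total completion probability strictly below~$1$. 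Consequently legal continuations always exist, each having positive fair-coin probability, so rejection sampling finds one after finitely many trials almost surely at every step, and the process extends $\omega$ forever with probability~$1$.

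Two points need genuine care, and the second is the real obstacle. The first is that $F_n$ is not decidable: $\K$ is only upper semicomputable, so the algorithm cannot test forbiddenness outright. I would resolve this by enumerating the forbidden strings in the limit and resampling as soon as a completed substring is \emph{discovered} to be forbidden; here the computability of the series $\sum_i a_i$ is used to keep all thresholds effective, and one must argue, in the spirit of the Moser--Tardos entropy-compression method, that this delayed and possibly backtracking resampling still produces an infinite sequence almost surely. The second and harder difficulty is to prevent a short new block from turning a very long, previously legal window into a forbidden one: for $n$ much larger than the block only a few tossed bits lie in the window, so the naive bound $|F_n|\,2^{-(\text{fresh bits})}$ no longer decays and the length sum threatens to diverge. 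The plan is to rule such events out structurally, using subadditivity of complexity---appending $\ell$ bits to a window whose complexity was already at least $a_{i'}(n-\ell)-c$ lowers the per-symbol complexity only by $O((\ell+\log n)/n)$---so that, after fixing a suitable relation between the block lengths and the scales $i$, the only windows that can newly become forbidden have length comparable to the current block. Making this quantitative, so that the completion-probability series genuinely converges, is where the bulk of the work lies.
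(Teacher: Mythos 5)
You have identified the right difficulty, but the resolution you sketch cannot work, and the obstacle is not the quantitative bookkeeping in your last paragraph: it is the combination of the write-once output with the uncomputably slow enumeration of the forbidden sets. A probabilistic algorithm must commit its output bits irrevocably, so Moser--Tardos-style backtracking is simply unavailable once a block has been printed, and no computable delay before printing helps, for the following reason. The set $F_n=\{u\in\BIT^n : \K(u)<a_{[\log n]}n-c\}$ is enumerable, but the time needed to enumerate \emph{all} of it is not bounded by any computable function of $n$: if it were, you could compute $F_n$ exactly and output some string not in it, producing a string of complexity $O(\log n)$ that is simultaneously of complexity at least $a_{[\log n]}n-c$, a contradiction whenever $a_{[\log n]}n\gg\log n$ (such lengths exist for any interesting series, e.g.\ $a_i=1/i^2$). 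So for infinitely many $n$, your algorithm will have committed an enormous (larger than any computable bound, say $2^{2n}$) number of blocks of fresh random bits before the enumeration of $F_n$ catches up. Conditioning each block on avoiding the forbidden strings \emph{known so far} leaves it essentially uniform with respect to the not-yet-enumerated part of $F_n$, so each committed window of length $n$ lands in that part with probability roughly $2^{-(1-a_{[\log n]})n-c}=\delta_n>0$, a constant independent of time; the probability that $2^{2n}$ such windows, built from fresh coin tosses, all miss it is at most $(1-\delta_n)^{2^{2n}}\to 0$. Hence any algorithm of your type produces a forbidden window with probability $1$, however the constants, block lengths and thresholds are tuned. (A secondary, fixable point: your subadditivity step loses $O(\log n)$, not $O(1)$, when passing from a window of length $n-\ell$ to one of length $n$; this can be absorbed by inflating $a_i$ by $O(i2^{-i})$, which keeps the series convergent. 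The fatal issue is the one above.)

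The paper's proof sidesteps detection and avoidance entirely; the complexity of the windows is inherited from randomness rather than enforced by rejection. It fixes a computable $f\colon\INT\to\INT$ and outputs $\omega_i=\tau_{f(i)}$, where $\tau$ is the raw random bit sequence, with $f$ arranged so that for each $m\ge m_0$ the first $a_m2^m$ bits of $\tau$ are copied along arithmetic progressions of difference $2^m$ whose first terms are smaller than $2^m$. Then \emph{every} window of length $2^m$ contains the entire prefix $\tau([0,a_m2^m))$, at positions reconstructible from $k\bmod 2^m$, so $\K(\omega([k,k+2^m)))\ge\K(\tau([0,a_m2^m)))-O(m)\ge a_m2^m-O(m)$, the last inequality holding with probability $1$ because prefixes of a random sequence are complex; convergence of $\sum a_i$ is exactly what lets all the progressions fit into $\INT$. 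Note that this deliberate reuse of random bits, correlating far-apart positions of the output, is precisely what your fresh-bits sampling lacks --- and, by the argument above, is forced to lack, which is why an avoidance-based proof of this theorem cannot succeed.
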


\begin{theorem}\label{th3}
No probabilistic algorithm can produce with positive probability
a sequence~$\omega$ with the following property\textup{:} there
exists a non-decreasing unbounded computable function
$g\colon\INT\to\INT$ such that
         $$\K(\omega([k,k+n))\mid k,n)\ge g(n)$$
for all $k$ and $n$.
\end{theorem}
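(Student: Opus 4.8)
The plan is to work with the (sub)probability measure $\mu$ on infinite binary sequences induced by the algorithm's internal coin tosses; strictly speaking this is only a lower semicomputable semimeasure, but for the argument below we may treat it as a computable measure (the semimeasure case needs only cosmetic changes). The property in the statement is existentially quantified over all non-decreasing unbounded computable $g\colon\INT\to\INT$, and there are only countably many partial computable functions. Hence the set of sequences having the property is the countable union $\bigcup_g A_g$, where
$$A_g=\{\omega:\K(\omega([k,k+n))\mid k,n)\ge g(n)\ \text{for all }k,n\}.$$
A countable union of $\mu$-null sets is $\mu$-null, so it suffices to fix one non-decreasing unbounded computable $g$ and prove $\mu(A_g)=0$.

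Next I would reformulate $A_g$ in terms of unavoidable simple patterns. For fixed $k$ and $n$ the strings $w$ of length $n$ with $\K(w\mid k,n)<g(n)$ number fewer than $2^{g(n)}$, and I will call them the \emph{simple} blocks for position $k$ and length $n$. By construction $\omega\in A_g$ exactly when $\omega$ contains no simple block at any position and length, so it is enough to exhibit one length $\ell$ (which we may take as large as we like) for which the avoidance event is $\mu$-null. More flexibly, since each element of a family of sets $S_{k,\ell}\subseteq\{0,1\}^\ell$ that is computable from $(k,\ell)$ and has $|S_{k,\ell}|\le 2^{g(\ell)}$ is determined by its index in a list of at most $2^{g(\ell)}$ strings, every such element is automatically simple; thus it suffices to design such $S_{k,\ell}$ with $\mu\{\omega:\forall k\ \omega([k,k+\ell))\notin S_{k,\ell}\}=0$. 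The crucial feature of the conditional complexity is that these target sets may depend on the position $k$ and the length $\ell$, but never on the actually produced prefix.

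The engine I would use is recurrence via the conditional (Lévy) form of the Borel--Cantelli lemma applied to the disjoint blocks $Y_j=\omega([j\ell,(j+1)\ell))$: if $\sum_j\mu\bigl(Y_j\in S_{j\ell,\ell}\mid Y_0,\dots,Y_{j-1}\bigr)=\infty$ holds $\mu$-almost surely, then almost surely some block lands in its target set, and $\omega\notin A_g$. The mechanism is transparent on two extremes. If $\mu$ is close to the fair-coin measure, take $S_{k,\ell}=\{0^\ell\}$; each block hits it with conditional probability about $2^{-\ell}$, the series diverges, and a run $0^\ell$ occurs almost surely. If instead $\mu$ is concentrated on a computable sequence $\sigma$, then every block satisfies $\K(\sigma([k,k+\ell))\mid k,\ell)=O(1)$ because $\sigma$ is its own description, so $\sigma$'s blocks are simple outright. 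This last remark is in fact the heart of the matter: \emph{no computable sequence lies in $A_g$}, and the whole content of the theorem is the upgrade from single sequences to sets of positive measure.

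The hard part, and the step I expect to be the main obstacle, is to carry out this argument uniformly over all computable $\mu$, since the targets must be chosen without inspecting the prefix and, for ``spread out'' measures, a single simple pattern may carry only exponentially small conditional mass. I would attack it by bridging the two extremes through the closed (indeed effectively closed) set $A_g$: assuming $\mu(A_g)>0$, pass by Lebesgue density for $\mu$ to a prefix cylinder on which $A_g$ has $\mu$-density close to $1$, and then extract a distinguished member of this positive-measure effectively closed class of controlled complexity, for instance its leftmost path. The goal is to show that the computability of $\mu$ forces the blocks of this path to be describable from their position and length alone, i.e.\ simple, contradicting membership in $A_g$ exactly as in the computable-sequence case. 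Quantifying ``controlled complexity'' so that $\K(\text{block}\mid k,\ell)$ really drops below $g(\ell)$ for arbitrarily large $\ell$ -- rather than merely below a linear or logarithmic bound -- is the delicate point, and it is here that the freedom to take $\ell$ large (so that the budget $2^{g(\ell)}$ is generous) and the computability of $\mu$ must be used together.
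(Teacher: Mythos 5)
Your setup matches the paper's: the reduction to a single fixed computable $g$ via countable additivity, the reformulation in terms of target sets of size at most $2^{g(n)}$ that may depend on the position $k$ and length $n$ but not on the produced prefix, and the observation that any string found in a canonical way from $(k,n)$ alone is automatically simple. But the proof stops exactly where it has to start. The obstacle you correctly identify --- that for a spread-out computable measure $\mu$ no single pattern need carry non-negligible (conditional) mass, so a Borel--Cantelli divergence hypothesis cannot be verified --- is resolved in the paper by an averaging trick that your proposal never finds: choose the forbidden string for each position $k\in\{0,\dots,N-1\}$ \emph{uniformly at random, independently of $\mu$}. Then for \emph{every} fixed output string of length $N+n$, the probability (over the random choice of forbidden strings) of avoiding all of them is exactly $(1-2^{-n})^N$, a bound that is independent of $\mu$ and tends to $0$ as $N\to\infty$. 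Averaging over $\mu$ and swapping the order of integration (Fubini) yields a \emph{deterministic} tuple of forbidden strings, one per position, whose avoidance event has $\mu$-probability less than $\varepsilon$. This completely sidesteps the need for any individual pattern to have large mass: mass is averaged over patterns, not concentrated in one.

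The second missing step is derandomization, which is where computability of $\mu$ actually enters: since the restriction of $\mu$ to strings of length $N+n$ is computable, the first such good tuple (in some fixed search order) can be found by exhaustive search from $(k,n,\varepsilon)$ alone, so each chosen forbidden string has conditional complexity $O(1)$ given $(k,n)$ --- below $g(n)$ for large $n$ because $g$ is unbounded. Hence $A_g$ is contained in the avoidance event and $\mu(A_g)<\varepsilon$ for every $\varepsilon$. Your substitute for these two steps (Lebesgue density plus the leftmost path of the effectively closed class $A_g$) is not carried out, and I do not believe it can be repaired as stated: the leftmost path of a positive-measure effectively closed class is in general only left-c.e., and such classes can consist entirely of sequences with high-complexity prefixes, so there is no generic mechanism forcing blocks of a distinguished member to be describable from $(k,n)$; any such argument would have to re-derive precisely the averaging step above. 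Finally, the semimeasure issue you call ``cosmetic'' does require a real (if small) idea in the paper's route, since the exhaustive search needs a computable distribution: one fixes $\log(1/\varepsilon)+O(1)$ advice bits approximating from below the probability that the algorithm outputs $N+n$ bits, enumerates that much mass, and runs the argument on the restricted (now computable) distribution, losing only $\varepsilon/2$ in total variation.
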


Theorem~\ref{th1} and \ref{th2} complement each other: the first
one says that $\alpha$-everywhere complex sequences for a fixed
$\alpha>0$ (even very small) cannot be obtained by a
probabilistic algorithm; the second one says that if we allow
sublinear growth and replace the bound $\alpha n$ by $\sqrt{n}$
or $n/\log^2 n$, then the probabilistic algorithm exists. (There
are intermediate cases where none of these theorems is
applicable, say, $n/\log
n$ bound; we do not know the answer for these cases.)

Theorem~\ref{th3} says that Theorem~\ref{th2} cannot be extended
to the case of generalized Levin's lemma; here the answer is
negative for any computable non-decreasing unbounded function.

\section{Proof of Theorem~\ref{th2}}

Let us start with the positive result.

\begin{proof}[Proof of theorem~\ref{th2}]
The idea of the construction is simple. We fix some computable
function $f\colon\mathbb{N}\to\mathbb{N}$ and then let
$\omega_i=\tau_{f(i)}$ where $\tau_i$ is a sequence of random
bits (recall that we construct a probabilistic algorithm that
uses random bit generator).

In other words, we repeat the same random bit $\tau_j$ several
times at the locations $\omega_i$ where $f(i)=j$. Why does this
help? It allows us to convert bounds for the complexity of
\emph{prefixes} of $\tau$ into bounds for the complexity of
\emph{substrings} of $\omega$. Indeed, if we have some substring
of $\omega$ and some additional information that tell us where
several first bits of $\tau$ are located in the substring, we
can reconstruct a prefix of $\tau$.

More details. We may assume without loss of generality that $n$,
the length of a substring, is large enough. We may also assume
that $n$ is a power of $2$, i.e., that $n=2^m$ for some $m$.
Indeed, for every substring $x$ we can consider its prefix $x'$
whose length is the maximal power of $2$ not exceeding the
length of $x$. The bound for complexity of $x'$ gives the same
(up to a constant factor) bound for the complexity of $x$.

Consider the substring $\omega([k,k+2^m))$ for some $k$ and $m$.
We want it to contain all the bits from some prefix of $\tau$,
more specifically, the first $a_m 2^m$ bits
$\tau_0,\ldots,\tau_{a_m 2^m-1}$ of $\tau$. (We may assume without loss of
generality that $a_m2^m$ is an integer.)

To achieve this, we put each of these bits at the positions that
form an arithmetic progression with common difference $2^m$. The
first term of this progression will be smaller than its
difference, and therefore each interval of length $2^m$ contains
exactly one term of this progression.

In this way for a given $m$ we occupy $a_m$-fraction of the
entire space of indices (each progression has density $1/2^m$
and we have $a_m 2^m$ of them). So to have enough room for all
$m$ we need that $\sum a_m\le 1$. This may be not the case at
first, but we can start with large enough $m_0$ to make the tail
small.

Technically, first we let $m=m_0$ and split $\mathbb{N}$ into
$2^{m}$ arithmetic progressions with difference $2^{m}$. (First
progression is formed by multiples of $2^{m}$, the second is
formed by numbers that are equal to $1$ modulo $2^{m}$, etc.) We
use first $a_m 2^m$ of them for level $m$ reserving the rest for
higher levels. Then we switch to level $m=m_0+1$, splitting
each remaining progression into two (even and odd terms), use some of them
for level $m_0+1$, convert the rest into progressions with twice
bigger difference for level $m_0+2$, etc. (Note that if in a
progression the first term is less than its difference, the same
is true for its two halves.)

This process continues indefinitely, since we assume that
$a_{m_0}+a_{m_0+1}+\ldots \le 1$. Note that even if this sum is
\emph{strictly} less than $1$, all natural numbers will be
included in some of the progressions: indeed, at each step we
cover the least uncovered yet number. So we have described a
total computable function $f$ (its construction depends on
$m_0$, see below).

Now we translate lower bounds for complexity of prefixes of
$\tau$ into bounds for complexity of substrings of $\omega$: the
substring $\omega([k,k+2^m))$ contains first $a_m 2^m$ bits of
$\tau$ (for $m\ge m_0$), and the positions of these bits can be
reconstructed if we know $k \bmod 2^m$ and the function $f$.
This additional information uses $O(m)$ bits (recall that
$m_0\le m$ and it determines $f$). So
        $$
\K(\omega([k,k+2^m)))\ge\K(\tau([0,a_m 2^m)))-O(m)\ge a_m2^m-O(m).
        $$
The last term $O(m)$ can be eliminated: increasing $a_m$ by
$O(m)/2^m$, and even more, say, by $m^2/2^m$, we do not affect
the convergence. (The bounds presented are literally true for prefix
complexity; plain complexity of prefixes of $\tau$ is a bit
smaller but the difference again can be easily absorbed by
a constant factor that
does not affect the convergence.)
\end{proof}

\section{Proof of Theorem~\ref{th3}}

The proofs of Theorem~\ref{th1} and Theorem~\ref{th3} are based on the same idea.
We start with proving Theorem~\ref{th3} as it is simpler.

\begin{proof}[Proof of Theorem 3]
Fix some probabilistic algorithm $A$. We need to prove that some property
(``there exists function $g$ such that $K(\omega([k,k+n))|k,n)\ge g(n)$
for all $k$ and $n$'') has probability $0$ with
respect to the output distibution of $A$.
Since there are countable many
computable functions $g$, it is enough to show that
\emph{for a given $g$} this
happens with probability $0$. So we assume that both $A$
(probabilistic algorithm) and $g$ (a
computable monotone unbounded function) are fixed,
and for a given $\varepsilon>0$ prove that the property
``$K(\omega([k,k+n))|k,n)\ge g(n)$ for all $k$ and $n$'' has probability
smaller than $\varepsilon$.

Assume first that probabilistic algorithm $A$ produce an
infinite output sequence with probability $1$, and therefore
defines a computable
probability distribution $P_A$ on the Cantor space of
infinite sequences.

Consider some $n$. First we prove that for large enough $N$
it is possible to select one ``forbidden'' string
of length $n$ for each starting position
$k=0,1,\ldots, N-1$ in such a way
that the event ``output sequence avoids
all the forbidden strings'' (at the corresponding
positions) has probability less than~$\varepsilon$.

It can be shows in different ways. For example,
we can use the following probabilistic argument.
Let us choose the forbidden strings randomly (independently
with the random bits used by $A$). For every
output sequnce of $A$ the probability that it avoids
all randomnly selected ``forbidden'' strings is $(1-2^{-n})^N$ which
is less than $\varepsilon$ if $N$ is sufficiently large.
Therefore, the overall probability of the event ``output of $A$
avoids all forbidden strings'' (with respect to the product
distribution) is less than $\varepsilon$. Now we use averaging in
different order and conclude that there exists one sequence of
$N$ forbidden strings with the required property.

After the existence of such a sequence is proved, it can be found
by exhaustive search (recall that $P_A$ is computable). Let us
agree that we use the first sequence with this property
(in some search
order) and estimate the complexity of forbidden strings
when length $n$ and position $k$ are known. The value of $N$
is a simple function of $n$ and $\varepsilon$ (which is fixed
for now, as well as $A$), and we do not need any other information
to construct forbidden strings. So their conditional complexity
is bounded and is less than $g(n)$ for large enough $n$.
So the probability that all the substrings in the output of $A$
will have complexity greater than $g(\text{their length})$, is
less than $\varepsilon$.

It remains to explain how to modify this argument for a
general case, withouth the assumption that $A$ generates
infinite sequences with probability $1$. Let us modify the function
$N(\varepsilon,n)$ in such a way that
$(1-2^{-n})^{N(n,\varepsilon)}<\varepsilon/2$.
Consider the probability of the event ``$A$
generates a sequence of length $N(n,\varepsilon)+n$''.
If somebody gives us (in addition to $n$ and $\varepsilon$)
an approximation from below
for this probability with error at most $\varepsilon/2$,
we may enumerate $A$'s output distribution on strings of
length $N+n$ and stop when the lower bound is reached.
Then we apply the argument above using this restricted
distribution and show that for this restricted distribution
the probability to avoid simple strings is less than $\varepsilon/2$,
which gives $\varepsilon$-bound for the full distribution
(since they differ at most by $\varepsilon/2$). It is important
here that the missing information is of size $\log(1/\varepsilon)+O(1)$,
so for a fixed $\varepsilon$ we need $O(1)$ additional bits.
\end{proof}

\section{Proof of Theorem~\ref{th1}}

The proof of Theorem~\ref{th1} is similar to the preceding one,
but more technically involved. In the previous argument we
were allowed to choose different forbidden strings for
different positions, and it was enough to use one
forbidden string for each position. Now we use the
same set of forbidden strings for all positions,
and the simple bound $(1-2^{-n})^N$ is replaced by
the following lemma.

\begin{lemma}
	\label{main-lemma}
Let $\alpha\in (0,1)$. For every $\varepsilon>0$ there
exist natural numbers $n$ and $N$
\textup(with $n<N$\textup) and random variables
$\mathcal{A}_{n}, \mathcal{A}_{n+1},\ldots, \mathcal{A}_N$
whose values are
subsets of $\mathbb{B}^n, \mathbb{B}^{n+1},\ldots,
\mathbb{B}^N$ respectively, that have the following properties:

\textup{(1)} the size of subset $\mathcal{A}_i$ never exceeds $2^{\alpha i}$;

\textup{(2)}~for every binary string $x$ of length $N$ the
probability of the event ``for some $i\in \{n,\ldots,N\}$
some element of $\mathcal{A}_i$ is a substring of $x$''
exceeds $1-\varepsilon$.

The number $n$ can be chosen arbitrarily large.
\end{lemma}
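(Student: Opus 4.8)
The plan is to use a probabilistic argument analogous to the one in Theorem~\ref{th3}, but now the forbidden strings must work simultaneously for all positions, which forces us to balance the number of forbidden strings at each length against the reuse across positions. I would choose the random variables $\mathcal{A}_i$ by independently selecting each forbidden set of strings of length $i$, and then estimate the probability that a fixed string $x$ of length $N$ avoids all of them. The key difficulty compared to Theorem~\ref{th3} is that a single forbidden string of length $i$ now has roughly $N$ chances to occur as a substring of $x$ (at positions $0,1,\dots,N-i$), so the relevant quantity is no longer $(1-2^{-n})^N$ but something that accounts for the overlap structure of substrings at many lengths.

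First I would fix $\alpha\in(0,1)$ and choose $n$ large (the lemma allows $n$ arbitrarily large, which we will need to absorb constants). For each $i\in\{n,\dots,N\}$ I would let $\mathcal{A}_i$ be a random subset of $\mathbb{B}^i$ of size exactly $\lfloor 2^{\alpha i}\rfloor$, obtained by sampling strings of length $i$ (the precise sampling model---with or without replacement---only affects constants, so I would pick whichever is cleanest). Property~(1) then holds by construction. For property~(2), fix an arbitrary $x\in\mathbb{B}^N$; the event that $x$ avoids all of $\mathcal{A}_n,\dots,\mathcal{A}_N$ is that none of the chosen forbidden strings equals any substring of $x$ of the corresponding length. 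The plan is to bound the probability of this avoidance event by a product over lengths $i$ and show it is below $\varepsilon$.

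The main estimate proceeds as follows. For a single forbidden string $s$ of length $i$ chosen uniformly from $\mathbb{B}^i$, the probability that $s$ occurs nowhere in $x$ is at most $1-(\text{number of distinct length-}i\text{ substrings of }x)/2^i$. The crucial combinatorial fact is that $x$ (of length $N$) has many distinct substrings of length $i$ whenever $2^i$ is not too large relative to $N$: in fact a string of length $N$ contains at least $N-i+1-(\text{repetitions})$ substrings, and for $i$ around $\log N$ a counting argument shows a constant fraction of the $2^i$ possible strings appear. Thus for $i$ near $\log_2 N$ each single random forbidden string hits $x$ with probability bounded below by a positive constant, and with $2^{\alpha i}$ independent choices the avoidance probability at that single length $i$ is already at most $(1-\text{const})^{2^{\alpha i}}$, which is doubly-exponentially small in $i$. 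Multiplying over the range of lengths (or even using one well-chosen length $i\approx\log_2 N$) drives the total avoidance probability below $\varepsilon$.

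The hard part will be making the combinatorial counting of distinct substrings uniform over all $x\in\mathbb{B}^N$: a worst-case $x$ such as $0^N$ has very few distinct substrings, so I cannot rely on $x$ being rich at every length simultaneously. The resolution I would pursue is that even a degenerate $x$ like $0^N$ contains the all-zero substring $0^i$, and a random $\mathcal{A}_i$ of size $2^{\alpha i}$ has a good chance of containing $0^i$ (or any other fixed frequently-occurring substring); more robustly, for \emph{every} $x$ and \emph{every} $i$ there is at least one length-$i$ substring occurring in $x$, and across the whole range $\{n,\dots,N\}$ the sets of substrings are large enough in aggregate that a random choice of forbidden sets catches every $x$ with high probability. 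Quantifying this uniform-over-$x$ guarantee---ensuring the bound holds for the worst string $x$, not merely on average---is the delicate point, and it is exactly where the freedom to take $n$ (and hence $N$) large is used to push the failure probability below the prescribed $\varepsilon$.
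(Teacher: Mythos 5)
Your proposal correctly isolates the difficulty --- strings like $0^N$ that have very few distinct substrings --- but the mechanism you offer for handling them does not work, and this is precisely where the paper's proof departs from a purely random construction. If every $\mathcal{A}_i$ is a uniformly random subset of $\mathbb{B}^i$ of size $2^{\alpha i}$, then the probability that $\mathcal{A}_i$ contains the single fixed string $0^i$ is $2^{\alpha i}/2^i = 2^{-(1-\alpha)i}$, which is exponentially small --- not ``a good chance,'' as you claim. Aggregating over all lengths does not rescue this: the probability that $x=0^N$ avoids every $\mathcal{A}_i$ is at least $\prod_{i=n}^{N}\bigl(1-2^{-(1-\alpha)i}\bigr)\ge 1-\sum_{i\ge n}2^{-(1-\alpha)i}$, and this lower bound tends to $1$ (not to $0$) as $n\to\infty$. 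So for degenerate strings your construction fails with probability close to $1$, no choice of parameters fixes it, and since the lemma demands the bound for \emph{every} $x$, the argument collapses exactly at the point you flagged as delicate.

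The missing idea is that degenerate strings must be handled \emph{deterministically, by counting}, not randomly. The paper observes that a string $x$ of length $N$ with fewer than $2^{n/2}$ distinct substrings of length $n$ is compressible: list the distinct blocks (a cost depending only on $n$) and then specify each of the $N/n$ aligned blocks by its index in the list ($n/2$ bits each), for a total of $N/2+O(1)$ bits. Hence there are fewer than $2^{\alpha N}$ such strings once $N$ is large enough, and \emph{all} of them can be placed into $\mathcal{A}_N$ as a fixed, non-random set; since $x$ is a substring of itself, every degenerate $x$ is then caught with probability $1$. The random set is used only at length $n$, where it catches the rich strings (those with at least $2^{n/2}$ distinct substrings of length $n$), giving avoidance probability roughly $(1-2^{-n/2})^{2^{\alpha n}}\approx e^{-2^{(\alpha-1/2)n}}$ --- which, note, requires $\alpha>1/2$, another issue your single-scale estimate glosses over. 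For smaller $\alpha$ the paper iterates the same dichotomy through nested lengths $n_1\ll n_2\ll n_3\ll\cdots$, re-encoding ``simple'' blocks at each level so that the random sets are drawn from the smaller space of simple blocks rather than from all of $\mathbb{B}^{n_j}$. Your proposal contains no counterpart of the deterministic compressibility level or of this hierarchy, and both are essential.
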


(We again use the probabilistic argument; this lemma estimates
the probability for every specific $x$ and some auxiliary
probability distribution; the output distribution of
randomized algorithm $A$ is not mentioned at all. Then we
use this lemma to get an estimate for the combined distribution,
and change the order of averaging to prove the existence of
finite sets $A_n,\ldots,A_N$ with required properties.)

\begin{proof}
First let us consider the case $\alpha>1/2$.
Then we actually need only two lengths $n$ and $N$,
where $N\gg n$, all other lengths are not used
and the corresponding random subsets
can be empty. For length $n$, we consider a uniform
distribution on all
sets of size $2^{\alpha n}$; all these
sets have equal probabilities to be a value of
random variable $\mathcal{A}_n$. For length $N$
the set $\mathcal{A}_N$ is chosen in some fixed way
(no randomness), see below.

Assume that some string $x$ of length $N$ is fixed.
There are two possibilities:

\textup{(\textbf{a})}~there are at least $2^{n/2}$
different substrings of length $n$ in $x$;

\textup{(\textbf{b})}~there are less than $2^{n/2}$
different substrings.

In the first case (a) strings of length $n$ play the
main role. Let $S$ be a set of $n$-bit strings that
appear in $x$; it contains at least $2^{n/2}$ strings.
The probability that the desired event does not happen
does not exceed the probability of the following event:
``making $2^{\alpha n}$ random choices among $n$-bit
strings, we never get into $S$''. (It is a bit smaller,
since now we can choose the same string several times.)
This probability equals
	$$
(1-2^{n/2})^{2^{\alpha n}}=
(1-2^{n/2})^{2^{n/2}2^{(\alpha-1/2)n}}\approx
(1/e)^{2^{(\alpha-1/2)n}}
	$$
and converges to zero (rather fast) as $n\to\infty$.

In the second case (b) strings of length $N$ come into play.
We may assume that $N$ is a multiple of $n$.
Let us split $x$ into blocks of size $n$.
We know that $x$ has some special property:
there are at most $2^{n/2}$ different blocks.
Note that for large $N$ the number of strings
with this special property is less than $2^{\alpha n}$.
Indeed, to encode such a string $x$, we first list
all the blocks that appear in $x$
(this is a very long list, but its length is
determined by $n$ and does not depend on $N$),
and then specify each block by its number in this
list. In this way we need $N/2+O(1)$ bits
(the number is twice shorter than the block
itself) and this is less than
$\alpha N$ for large $N$. So for such a
large $N$ we may include all strings with
this property in $A_N$ and get the desired
effect with probability $1$.

Now let us consider the case when $\alpha>1/3$
(but can be less than $1/2$). Now we need three
lengths $n_1\ll n_2\ll n_3$. We will use $n_2$
that is a multiple of $n_1$, and $n_3$ that is a
multiple of $n_2$. For length $n_1$ we again
consider a random set of $2^{\alpha n_1}$
strings of length $n_1$. It guarantees success
if the string $x$ contains at least $2^{(2/3)n_1}$
different blocks of length $n_1$.

Now we compile a list of possible blocks
of size $n_2$ that are ``simple'', i.e.,
contain at most $2^{(2/3)n_1}$ different
blocks of size $n_1$. The same argument
as before shows that a simple block
can be described by $(2/3)n_2+O(1)$ bits,
where $O(1)$ depends only on $n_1$. Now $A_{n_2}$
is a random set of $2^{\alpha n_2}$
\emph{simple} blocks of size $n_2$.
Then the argument again splits into two subcases.
(Recall that we assume now that $x$ is made of
simple blocks of size $n_2$.)

The first case happens when $x$ contains more
than $2^{n_2/3}$ different simple blocks. Then with high
probability some block of $x$ appears in $A_{n_2}$.

The second case happen when $x$ contains less than $2^{n_2/3}$
different simple blocks. Then $x$ can be encoded by the list
of these blocks, and this requires $n_3/3+O(1)$ bits. So
if $n_3$ is large enough (compared to $n_2$), all possibilities
can be included in $A_{n_3}$, and this finishes the argument
for $\alpha>1/3$.

Similar argument with four layers works for $\alpha>1/4$, etc.
\end{proof}

This lemma will be the main technical tool in the proof
of Theorem~\ref{th1}. But first let us prove a purely
probabilitic counterpart of Theorem~\ref{th1} that is
of independent interest.

\begin{theorem}
Let $\alpha\in (0,1)$.
For every probability distribution $P$ on Cantor space
$\Omega$, there exist sets $A_1, A_2,\ldots$ of binary strings
such that

\textup{(1)} the set $A_n$ contains at most $2^{\alpha n}$
strings of length $n$;

\textup{(2)} with $P$-probability $1$ a random sequence has
substrings in $A_i$ for infinitely many $i$.

\end{theorem}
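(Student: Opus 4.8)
The plan is to bootstrap Lemma~\ref{main-lemma} from a single-string, random-set statement into a statement about a $P$-random sequence and \emph{deterministic} sets, and then to iterate it over disjoint ranges of lengths so that the Borel--Cantelli lemma forces infinitely many hits. Throughout I use that $P$ lives on the Cantor space $\Omega$ of infinite sequences, so every $\omega$ has a well-defined prefix of each length.

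First I would fix a summable sequence of error parameters, say $\varepsilon_k=2^{-k}$, and build the sets $A_i$ in pairwise disjoint blocks of lengths. Using the final clause of Lemma~\ref{main-lemma} (that $n$ may be taken arbitrarily large), I choose for each $k$ a range $[n_k,N_k]$ with $N_{k-1}<n_k$, together with random sets $\mathcal{A}_{n_k},\dots,\mathcal{A}_{N_k}$ satisfying the two properties of the lemma with $\varepsilon=\varepsilon_k$. For lengths falling in no range I set $A_i=\emptyset$; this keeps the size bound trivial there, while inside each block property~(1) of the theorem is inherited from clause~(1) of the lemma.

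The key step is the derandomization that converts clause~(2) into a statement about $P$. Drawing $\omega\sim P$ independently of the random sets, I average the lemma's bound over the length-$N_k$ prefix of $\omega$: since for \emph{every} fixed string $x$ of length $N_k$ the probability (over the random sets) that $x$ contains a forbidden substring exceeds $1-\varepsilon_k$, integrating over the $P$-distribution of the prefix shows that the joint probability (over $\omega$ and the sets) of a hit still exceeds $1-\varepsilon_k$. Swapping the order of averaging then yields a single deterministic value of $(\mathcal{A}_{n_k},\dots,\mathcal{A}_{N_k})$ — call the chosen sets $A_{n_k},\dots,A_{N_k}$ — for which the $P$-probability that $\omega$ has a substring in $\bigcup_{i=n_k}^{N_k}A_i$ exceeds $1-\varepsilon_k$. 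This is the usual ``there exists a good deterministic choice'' argument, and it is the one point where one must be careful that the two sources of randomness (the sets and $\omega$) are genuinely independent, so that Fubini applies and a substring of the prefix counts as a substring of $\omega$.

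Finally I would assemble all the blocks and invoke Borel--Cantelli. Let $B_k$ be the event that $\omega$ has \emph{no} substring in any $A_i$ with $i\in[n_k,N_k]$; by construction its $P$-probability is below $\varepsilon_k$, and $\sum_k\varepsilon_k<\infty$. By the first Borel--Cantelli lemma, with $P$-probability $1$ only finitely many $B_k$ occur, so all but finitely many blocks produce at least one index $i$ — necessarily distinct across blocks, as the ranges are disjoint — for which $\omega$ has a substring in $A_i$. Hence with $P$-probability $1$ the sequence has substrings in $A_i$ for infinitely many $i$, which is property~(2). I expect the main obstacle to be purely the bookkeeping of the derandomization step; once the independence is arranged and the order of averaging is swapped correctly, the rest is a routine application of the (first) Borel--Cantelli lemma, which notably needs no independence among the $B_k$.
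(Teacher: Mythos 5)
Your proposal is correct and follows essentially the same route as the paper: invoke Lemma~\ref{main-lemma} on disjoint blocks of lengths with $\varepsilon_k=2^{-k}$, derandomize by averaging over the product of $P$ and the lemma's distribution and swapping the order of integration, then apply the first Borel--Cantelli lemma to the summable block-failure probabilities. The paper's proof is organized identically (including the observation that the lemma's ``$n$ arbitrarily large'' clause allows non-overlapping intervals), so there is nothing to add.
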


The possible ``philosophical'' interpretation
of this theorem: one cannot prove the existence
of sequences that avoid almost all $A_i$ by a direct
application of the probabilistic method, something
more delicate (e.g., Lovasz local lemma) is needed.

\begin{proof}
Let us first consider sequences of some finite length $N$
and the induced probability distribution on them. We claim
that for every $\varepsilon$ and for large enough $N$ we
can choose $A_1,\ldots,A_N$ in such a way that they satisfy
(1) and $P$-probability to avoid them is less than $\varepsilon$.

To show this, consider (independent) random distribution
on strings of lengths $1,\ldots,N$ provided by the lemma.
What is the probability that random string avoids random
set (with respect to the product distribution of $P$
and the distribution provided by the lemma)? Since for every
fixed string the probability
is less than $\varepsilon$ (assuming $N$ is large enough),
the overall probability (the average) is less than $\varepsilon$.
Changing the order of averaging, we see that for some
$A_1,\ldots,A_N$ the corresponding $P$-probability is
less than $\varepsilon$.

Note that in fact we do not need short strings;
strings longer than any given $n$ are enough
(if $N$ is large). So we can use this argument
repeatedly with non-overlapping intervals
$(n_i,N_i)$ and $\varepsilon_i$ decreasing fast
(e.g., $\varepsilon_i=2^{-i}$). Then for $P$-almost
every sequence we get infinitely many violations.
Moreover, since the series $\sum \varepsilon_i$ is
converging, $P$-almost every sequence hits
all but finitely many of $A_i$
(Borel--Cantelli lemma).
\end{proof}

Now we are ready to prove the weak version of Theorem~\ref{th1}:
\begin{quote}
\emph{Let $\alpha\in(0,1)$. There is no randomized algorithm
that produces $\alpha$-everywhere
complex sequences with probability $1$.}
\end{quote}

(The difference with the full version is that here we
have probability $1$ instead of any positive probability and
that the value of $\alpha$ is fixed.)

To prove this statement, let us
consider the output distribution $P$ of this
algorithm. Since the algorithm produces an infinite
sequence with probability~$1$, this distribution is a
computable probability distribution on the Cantor space.
This measure can be then used to effectively find
sequences $\varepsilon_i$, $n_i$, $N_i$ and sets $A_i$ as described
so that with $P$-probability $1$ a random sequence hits all
$A_i$ except for finitely many of them.
Since the sets $A_i$ can be effectively
computed and have at most $2^{\alpha i}$ elements,
every element of $A_i$ has complexity at
most $\alpha i+O(\log i)$; the logarithmic term can be absorbed
by a change in~$\alpha$.

This argument shows also that for every computable probability
distribution $P$ and every $\alpha\in(0,1)$
there exists a Martin-L\"of random
sequence with respect to $P$ that is not $\alpha$-everywhere complex.
One more corollary: for every $\alpha\in(0,1)$ the
(Medvedev-style) mass problem
``produce an $\alpha$-everywhere complex sequence'' is not
Medvedev (uniformly) reducible to the problem
``produce a Martin-L\"of random sequence''.

It remains to make the last step to get the proof of Theorem~\ref{th1}.

\begin{proof}[Proof of Theorem~\ref{th1}]

If the probability to get an sequence that is $\alpha$-everywhere
complex is positive, then for some $\alpha$ the probability
to get a $\alpha$-everywhere complex sequence for this specific
$\alpha$ is positive. (Indeed, we may consider only rational $\alpha$
and use countable additivitey.)

So we assume that some $\alpha$ is fixed and some probabilistic
algorithm produces $\alpha$-everywhere complex sequences with positive
probability. We cannot apply the same argument as above.
The problem is that the output of the algorithm (restricted
to the first $N$ bits) is a distribution on $\mathbb{B}^N$
that is not computable (the probability that at
least $N$ bits appear at the output, is only a
lower semicomputable real). However, for applying
our construction for some $\varepsilon_i$, it is
enough to know the output distribution up to
precision $\varepsilon_i/2$ (in terms of statistical distance),
as explained in the proof of Theorem~\ref{th3}, we replace
our distribution by its part, and the error is at most $\varepsilon/2$.
For this we need only $\log (1/\varepsilon_i)+O(1)$ bits of
advice, which can be made small compared to $\alpha n$.
\end{proof}

Now we get a stronger statements for mass problems:

\begin{theorem}
The mass problem ``produce an everywhere complex sequence''
is not Muchnik \textup(non-uniformly\textup) reducible to the
problem ``produce a Martin-L\"of random sequence''.
\end{theorem}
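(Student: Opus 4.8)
The plan is to prove the statement by exhibiting a single Martin-L\"of random sequence $r$ that computes \emph{no} everywhere complex sequence at all. This is exactly what the failure of Muchnik reducibility demands: Muchnik (non-uniform) reducibility of ``produce an everywhere complex sequence'' to ``produce a Martin-L\"of random sequence'' would say that every Martin-L\"of random $r$ computes some everywhere complex $\omega$, via some Turing functional that is allowed to depend on $r$. To refute this I must produce one random oracle on which all functionals simultaneously fail. This is the essential difference with the Medvedev case treated earlier: there a single functional had to fail on some random input, whereas now I must defeat all functionals at once on a common input.

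First I would fix the uniform measure $\mu$ on Cantor space and enumerate all Turing functionals $\Phi_0,\Phi_1,\ldots$. The key observation is that for each $e$ the map $r\mapsto\Phi_e^{\,r}$ is precisely a probabilistic algorithm in the sense of Theorem~\ref{th1}: it reads random bits (the oracle) and outputs a sequence bit by bit, possibly failing to be total, and its output distribution is the push-forward of $\mu$ under $\Phi_e$, which is only lower semicomputable --- exactly the situation handled in the proof of Theorem~\ref{th1}. Hence Theorem~\ref{th1} applies directly and yields
$$\mu\bigl(\{\,r : \Phi_e^{\,r}\text{ is total and everywhere complex}\,\}\bigr)=0$$
for every $e$. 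Note that Theorem~\ref{th1} already absorbs the quantifier ``for some $\alpha\in(0,1)$'', so no separate union over $\alpha$ is needed here.

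Next I would take the countable union over all functionals and set $B=\{\,r:\Phi_e^{\,r}\text{ is total and everywhere complex for some }e\,\}$, so that $\mu(B)=0$ by countable additivity. Since the Martin-L\"of random sequences form a set of $\mu$-measure $1$, its intersection with the complement of $B$ again has measure $1$ and is in particular nonempty. Any $r$ in it is Martin-L\"of random yet computes no everywhere complex sequence, which refutes the reducibility. I emphasize that this last step needs only that $B$ is a plain null set, not an effectively null one, so no test construction is required.

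The measure-theoretic bookkeeping and the enumeration of functionals are routine; the step deserving care --- and the one I expect to be the main obstacle --- is the identification of $\Phi_e$ with a random oracle as a legitimate probabilistic algorithm whose finite-prefix output distribution is lower semicomputable, so that the mechanism of Theorem~\ref{th1} (replacing the true distribution by a computable lower approximation at the cost of $\log(1/\eps)+O(1)$ advice bits) transfers without modification. Here one should check that querying a uniformly random oracle at arbitrary positions and in arbitrary order induces the same output distribution as consuming fresh fair coins, and that the possible non-totality of $\Phi_e^{\,r}$ causes no trouble, since a partial $\Phi_e^{\,r}$ simply fails to be a solution of the mass problem and may be discarded.
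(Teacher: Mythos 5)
Your proposal is correct and takes essentially the same approach as the paper: the paper likewise enumerates the countably many oracle machines, regards each one with a uniformly random oracle as a probabilistic algorithm, and combines Theorem~\ref{th1} with countable additivity and the fact that the Martin-L\"of random sequences have measure one. The only difference is presentational --- the paper argues by contradiction (if the reduction existed, some single machine would succeed on a positive-measure set of random oracles, contradicting Theorem~\ref{th1}), whereas you take the contrapositive, noting that each machine's success set is null and exhibiting a random oracle outside their union.
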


\begin{proof}
Indeed, imagine that for every random sequence there is some
oracle machine that transforms it to an everywhere complex
sequence. Since the set of oracle machines is countable, some
of then should work for a set of random sequences that has
positive measure, which contradicts Theorem~\ref{th1}.
\end{proof}

The author thanks Steven Simpson for asking the question,
and Joseph Miller and Mushfeq Khan for the discussion and useful remarks.

\end{document}